\newlist{inparaenum}{enumerate}{2}
\setlist[inparaenum]{nosep}
\setlist[inparaenum,1]{label=\bfseries\alph*.}
\newtheorem{theorem}{Theorem}[section]
\newtheorem{lemma}[theorem]{Lemma}
\newtheorem{proposition}[theorem]{Proposition}
\def\comment#1{}
 \DeclareMathOperator\diag{diag}
\def\invddots{\mathinner{\mskip1mu\raise1pt\vbox{\kern7pt\hbox{.}}\mskip2mu
		\raise4pt\hbox{.}\mskip2mu\raise7pt\hbox{.}\mskip1mu}}
\journal{TBD}
\begin{document}

\begin{frontmatter}



\title{Existence of flipped orthogonal conjugate symmetric Jordan canonical bases for real $H$-selfadjoint  matrices}


\author[label2]{S. Dogruer Akgul}
\author[label2]{A. Minenkova}
\author[label2]{V. Olshevsky}

	
\address[label2]{
	{Department of Mathematics,  University of Connecticut, Storrs CT 06269-3009,  USA. Email: sahinde.dogruer@uconn.edu, anastasiia.minenkova@uconn.edu, olshevsky@uconn.edu}}

\begin{abstract}
For real matrices selfadjoint in an indefinite inner product there are two special canonical Jordan forms, that is (i) flipped orthogonal (FO) and (ii) $\gamma$-conjugate symmetric (CS). These are the classical Jordan forms with certain additional properties induced by the fact that they are $H$-selfadjoint. In this paper we prove that for any real $H$-selfadjoint matrix there is a  $\gamma$-FOCS Jordan form that is simultaneously flipped orthogonal and  $\gamma$-conjugate symmetric.
\end{abstract}

\begin{keyword}  Canonical Jordan bases \sep Indefinite inner product \sep $H$-selfadjoint matrices



\end{keyword}

\end{frontmatter}


\section{Introduction}

\subsection{$H$-selfadjoint matrices and the affiliation relation}

Let  $[\cdot,\cdot]$ denote the indefinite inner product, that is $[\cdot,\cdot]$ satisfies all the axioms of the usual inner product except for positivity.

For example, for every $n\times n$ invertible hermitian matrix $H$
\begin{equation}\label{Hinnerproduct}
    [x,y]_H=y^*Hx,\,\text{ for }x,y\in\mathbb{C}^n
\end{equation}
determines an indefinite inner product. Conversely, for any indefinite inner product we can find such $H$ that \eqref{Hinnerproduct} holds true. If $H$ is positive-definite then $[\cdot,\cdot]_H$ is a classical inner product, but in what follows we assume that $H$ is just Hermitian.

{\bf H-selfadjoint matrices.} A matrix $A$ is called \textbf{H-selfadjoint} if 
 \begin{equation}\label{HSA}
     A=H^{-1}A^*H\text{ (or $[Ax,y]_H=[x,Ay]_H$)}.
 \end{equation} 
These matrices have many applications and have been studied by many authors (e.g. see \cite{GLR83} and many references therein).  
 
If follows from \eqref{HSA} that $H$-selfadjoint matrices have eigenvalues symmetric about the real axis. Moreover, the sizes of Jordan blocks for the conjugate eigenvalues are the same.

{\bf Affiliation relation.  } Let us consider the change of basis    $$x\mapsto u =T^{-1}x, \quad     y\mapsto v =T^{-1}y.$$
It follows that $$[x,y]_H=y^*Hx=(Tv)^*H(Tu)=v^*\overset{G}{\overbrace{(T^*HT)}}u=[u,v]_G.$$
Therefore, if $T$ is a change of basis matrix, then in the new basis the same inner product is given by
a new congruent matrix $G = T^*HT$.

	Two pairs $(A,H)$ and $(B,G)$ are called {\bf affiliated} if $$T^{-1}AT=B\text{ and }T^{*}HT=G.$$
	The  relation $(A, H) \overset{T}{\mapsto} (B,G)$ is called the {\bf affiliation relation}.
	That is, if we change the basis, the first matrix in a new basis is similar (as in the general case), but the second is congruent as described above. 
	
	Note that that the affiliation relation preserves selfadjointness in the indefinite inner product, that is if $A$ is $H$-selfadjoint, then $B$ is $G$-selfadjoint. 
	
As to canonical forms, in	the $H$-selfadjoint case we are looking for the affiliation
    $(A,H) \mapsto (J,P)$ where not only $J$ is the Jordan form but also $P$ has a certain simple form. 
   One example of such a form is considered next. 
	
\subsection{Flipped-Orthogonal Bases}

Let us consider the following example
\begin{equation}\label{A}
    A=\begin{bmatrix}
0&1&0&0\\
0&0&1&0\\
0&0&0&1\\
-1&0&-2&0\end{bmatrix}=TJT^{-1},\quad    H=\begin{bmatrix}
0&2&0&1\\
2&0&1&0\\
0&1&0&0\\
1&0&0&0\end{bmatrix},
\end{equation}
where  $A=H^{-1}A^*H$ and
\begin{equation}\label{JT}
    J=\left[\begin{array}{cc|cc}
i&1&0&0\\
0&i&0&0\\\hline
0&0&-i&1\\
0&0&0&-i\end{array}\right],\quad     P=\left[\begin{array}{cc|cc}
0&0&0&1\\
0&0&1&0\\\hline
0&1&0&0\\
1&0&0&0\end{array}\right],
\end{equation}
{and}
\begin{equation}\label{P}
T=\frac{1}{2}\begin{bmatrix}
-i&0&i&1\\
1&-i&1&0\\
i&2&-i&1\\
-1&3i&-1&-2i\end{bmatrix}.
\end{equation}

Moreover,   $J=P^{-1}J^*P$ and $P=T^*HT$, so that
$$(A,H) \overset{T}{\mapsto} (J,P).$$
This is a very special affiliation relation, since $(J,P)$
is of a particular form with $J$ being the Jordan canonical form of $A$, and $P$ was called the sip matrix in \cite[Section 5.5]{GLR83}, which is uniquely defined up-to perturbation of the Jordan blocks in $J$.

Thus, $(J,P)$ is a canonical form. It first appeared in the works by Weierstrass \cite{W68,W95} (see also \cite[Chapter 5]{GLR83} and \cite[Chapter 3]{M63}).
It can be seen that the columns of $T=[t_i]$ satisfy 
$$[t_i,t_j]_H=[e_i,e_j]_P=\delta_{i,4-j},$$
where $\delta_{i,j}$ is the Kronecker symbol. Therefore, we suggest to call the Jordan basis $\{t_1,\ldots,t_n\}$ \textbf{flipped orthogonal} in agreement with its usage in \cite{BOP}.


\subsection{$\gamma$-Conjugate Symmetric Bases}

 While the above definitions were given for the general complex case, in this paper we focus on the case, where $A$ and $H$ are real.
 
 {\it Are there any canonical forms for the real case?}
 
 One immediate example is given next. Here the matrices $A$ and $H$ are the same as in \eqref{A}.

Let us consider another Jordan basis for $A$.
\begin{equation}\label{R}
R=\frac{1}{2}\begin{bmatrix}
-i&2&i&2\\
1&i&1&-i\\
i&0&-i&0\\
-1&i&-1&-i\end{bmatrix}.
\end{equation}
Note that the columns $R=[r_1\ r_2\ r_3\ r_4]$ are conjugate of each other:
$$r_1=\overline{r}_3\text{ and }r_2=\overline{r}_4.$$

Hence, we say that  $R$ captures a conjugate symmetric basis of $A$ from \eqref{A} and $(A,H)\overset{R}{\mapsto}(J,G)$, where
$$
G=R^*HR=\begin{bmatrix}
0&0&0&1\\
0&0&1&-3i\\
0&1&0&0\\
1&3i&0&0\end{bmatrix}
$$is not antidiagonal as was $P$ in \eqref{P}. That is, $R$ is CS but not FO.
Also, if we look at the FO basis in $T$ from \eqref{JT}, it is clearly not CS.

\subsection{FOCS Bases}
Now that we have introduced the FO and $\gamma$-CS canonical bases we might ask:
{\it Is there a Jordan basis for real $H$-selfadjoint matrices that is simultaneously  flipped orthogonal and $\gamma$-conjugate symmetric?} 

Consider the following matrix whose columns also capture a Jordan basis of the same matrix $A$ as above.
$$M=\frac{1}{4}\begin{bmatrix}
-2i&1&2i&1\\
2&-i&2&i\\
2i&3&-2i&3\\
-2&5i&-2&-5i\end{bmatrix}.$$
In fact, $(A,H)\overset{M}{\mapsto}(J,P)$. Moreover, $M$ is CS.

We summarize all the above examples in the following figure.

\begin{center}
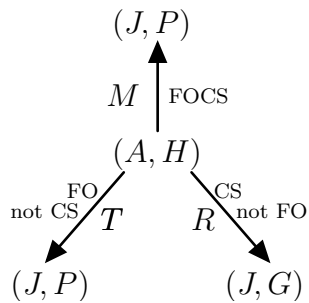

\begin{tikzpicture}[line cap=round,line join=round,>=triangle 45,x=1.5cm,y=1.1cm]
\clip(-14.5,4.35) rectangle (-11.5,7.9);
\draw (-13.5,8) node[anchor=north west] {$(J,P)$};
\draw [->,line width=1pt] (-13,6.4) -- (-13,7.5);
\draw [->,line width=1pt] (-13.3,5.9) -- (-14,4.8);
\draw [->,line width=1pt] (-12.7,5.9) -- (-12,4.8);
\draw (-12.5,4.85) node[anchor=north west] {$(J,G)$};
\draw (-14.4,4.85) node[anchor=north west] {$(J,P)$};
\draw (-13.5,6.45) node[anchor=north west] {$ (A,H)$};
\draw (-13.55,7.1) node[anchor=north west] {$M$};
\draw (-13,7.05) node[anchor=north west] {\scriptsize FOCS};
\draw (-13.6,5.6) node[anchor=north west] {$T$};
\draw (-13.9,5.9) node[anchor=north west] {\scriptsize FO};
\draw (-14.4,5.65) node[anchor=north west] {\scriptsize not CS};
\draw (-12.4,5.65) node[anchor=north west] {\scriptsize not FO};
\draw (-12.6,5.9) node[anchor=north west] {\scriptsize CS};
\draw (-13.6,5.6) node[anchor=north west] {$T$};
\draw (-12.8,5.6) node[anchor=north west] {$R$};
\end{tikzpicture}
\captionof{figure}{Jordan bases of $A$.}
\end{center}

In this case, we say that $M$ captures the \textbf{flipped orthogonal conjugate symmetric basis}.

This is only an example. We show the existence of such bases in the next section.

\subsection{Main results}
In this paper, we consider the case of real matrices $A$ and $H$. For the real case we prove the existence of a \lq\lq refined\rq\rq~FO-basis which we  call the $i$-FOCS basis (i.e. flipped orthogonal and also $i$-conjugate symmetric).  Namely, the Jordan chains of $A$ corresponding to $\lambda$ and $\overline{\lambda}$ are just scaled conjugates of each other, still enjoying the flipped orthogonal property. 


\section{Three Canonical Bases}

We have already presented an example of an FO basis. The next subsection shows the existence of such a basis for any $H$-selfadjoint matrix.

\subsection{FO Bases}

The next proposition establishes the existence of FO bases for any (potentially non-real) pair $(A,H)$.

\begin{proposition}[\cite{BOP}]\label{thm:main1} Let ${A},H \in\mathbb{C}^{n \times n}$  be given with $H=H^*$ and $A$ being $H$-selfadjoint matrix. Let $J$ be its Jordan form
	\begin{equation}\label{J}
	J= J(\lambda_1)\oplus \cdots J(\lambda_\alpha)\oplus \widehat{J}(\lambda_{\alpha+1})\oplus \cdots \oplus\widehat{ J}(\lambda_\beta)
	\end{equation}
where $\lambda_1,\ldots,\lambda_\alpha\in\mathbb{R}$,  $\lambda_{\alpha+1},\ldots,\lambda_\beta\notin\mathbb{R}$,  $J(\lambda_i)$ is a  Jordan  block for real eigenvalues $\lambda_1,\ldots,\lambda_{\alpha}$ and 
	\begin{equation}\label{Jk}
	\widehat{J}(\lambda_k)=\left [ \begin{array}{cc}
	J(\lambda_k) &0\\
	0 &J(\overline{\lambda}_k)\\
	\end{array} \right]
	\end{equation} is the direct sum of two Jordan blocks of the same size corresponding to $\lambda_k$ and~$\overline{\lambda}_k$.  Then there exists an invertible matrix $T$ such that
\begin{equation*}
    (A,H)\overset{T}{\mapsto} (J,P)
\end{equation*} where
	\begin{equation}\label{eq:P}
	P=P_1 \oplus\cdots \oplus P_{\alpha} \oplus {P}_{\alpha+1} \oplus \cdots \oplus{P}_{\beta}
	\end{equation}
	where $P_k$ is a sip matrix $\epsilon_k \tilde{I}_k$
	(i.e. $\left[ \begin{smallmatrix}
	&&& \\
	0 & \ldots & 0 &\epsilon_k\\
	\vdots & \invddots & \epsilon_k & 0\\
	0 & \invddots & \invddots & \vdots\\[0.5em]
	\epsilon_{k} & 0 & \cdots & 0\\
	&&& 
	\end{smallmatrix} \right]$) of the same size as $J({\lambda_k})$ and $\epsilon_{k}=\pm{1}$  for $ k=1,\ldots,\alpha$ and a sip matrix $\tilde{I}_k$ (i.e. $\left[ \begin{smallmatrix}
	&&& \\
	0 & \ldots & 0 &1\\
	\vdots & \invddots & 1 & 0\\
	0 & \invddots & \invddots & \vdots\\[0.5em]
	1 & 0 & \cdots & 0\\
	&&& 
	\end{smallmatrix} \right]$) of the same size as  $\hat{J}({\lambda_k})$ for $k=\alpha+1,\ldots,\beta$. 
\end{proposition}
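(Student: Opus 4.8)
The plan is to reduce to the classical spectral decomposition of an $H$-selfadjoint matrix and then handle one ``primitive'' block at a time. First I would invoke the standard fact (see \cite{GLR83}) that the space $\mathbb{C}^n$ decomposes as an $[\cdot,\cdot]_H$-orthogonal direct sum of $A$-invariant subspaces, one for each eigenvalue $\lambda$ real and one for each conjugate pair $\{\lambda,\overline\lambda\}$ with $\lambda\notin\mathbb{R}$; moreover this orthogonal decomposition refines so that distinct Jordan blocks sit in mutually $H$-orthogonal pieces. This lets me assume $A$ is a single Jordan block $J(\lambda_k)$ with $\lambda_k$ real, or a single pair $\widehat J(\lambda_k)$ as in \eqref{Jk}, the ambient space being the corresponding root subspace, and $H$ the restriction of the form to it. The target is then to produce, on each such piece, a Jordan basis $\{t_1,\dots,t_m\}$ (or $\{t_1,\dots,t_m,s_1,\dots,s_m\}$ in the paired case) that is flipped orthogonal with the prescribed sign pattern; assembling these bases block-diagonally gives the global $T$ and $P=P_1\oplus\cdots\oplus P_\beta$.

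Next I would treat the single real Jordan block. Start from any Jordan basis $t_1,\dots,t_m$ with $At_1=\lambda t_1$, $At_{j}=\lambda t_j+t_{j-1}$. Using $H$-selfadjointness one shows that the Gram matrix $G=(\,[t_i,t_j]_H\,)$ is upper anti-triangular with a constant value $c:=[t_1,t_m]_H$ along the main anti-diagonal, and $c\neq 0$ because $H$ is invertible and the block is indecomposable. (The key identity is $[t_i,t_j]_H=[t_{i-1},t_{j+1}]_H$, read off from $[(A-\lambda)t_i,t_j]_H=[t_i,(A-\lambda)t_j]_H$.) Since $H$ is Hermitian and $\lambda$ is real, $c$ is real, so $\epsilon_k:=\operatorname{sign}(c)=\pm1$. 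One then modifies the Jordan basis by an upper-triangular Toeplitz change of coordinates — the group of such changes preserves the Jordan chain structure — to clear the entries of $G$ above the anti-diagonal and rescale $c$ to $\epsilon_k$; this is the standard ``normalization of a chain'' computation and yields $P_k=\epsilon_k\tilde I_m$. For the conjugate pair $\widehat J(\lambda_k)$, I would start with a Jordan basis $t_1,\dots,t_m$ for the $\lambda_k$-chain and $s_1,\dots,s_m$ for the $\overline\lambda_k$-chain. Now $[t_i,t_j]_H=0$ and $[s_i,s_j]_H=0$ for all $i,j$ (two $A$-invariant subspaces for non-conjugate eigenvalues — here $\lambda_k$ with itself, since $\lambda_k\neq\overline{\lambda_k}$ — are $H$-orthogonal), so the only surviving pairing is between the two chains, giving a $2m\times 2m$ Gram matrix of the form $\left[\begin{smallmatrix}0&G_{12}\\G_{12}^*&0\end{smallmatrix}\right]$ with $G_{12}$ upper anti-triangular and nonsingular. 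Rescaling $s_1,\dots,s_m$ by a single common upper-triangular Toeplitz matrix (and possibly a scalar) normalizes $G_{12}$ to $\tilde I_m$, producing $P_k=\tilde I_{2m}$; crucially no sign $\epsilon_k$ appears here because we may absorb any nonzero complex scalar into the freely-chosen second chain.

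The step I expect to be the main obstacle is verifying that the normalizing change of basis can always be taken inside the right group — upper-triangular Toeplitz matrices on a single chain, or a common such matrix relating the two chains in a pair — and that this group acts transitively on the relevant set of admissible Gram matrices (upper anti-triangular with nonzero anti-diagonal, up to the sign $\epsilon_k$ in the real case). Concretely one must solve, degree by degree, for the Toeplitz coefficients that kill the super-anti-diagonal entries; this is triangular and always solvable, but writing the recursion cleanly and confirming it respects $H$-selfadjointness of the transformed pair is the technical heart. Everything else — the orthogonal spectral decomposition, reality of $c$, block-diagonal assembly — is either quoted from \cite{GLR83} or a short direct check.
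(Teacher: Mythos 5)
The paper offers no proof of this proposition: it is quoted from \cite{BOP} and is essentially the classical Weierstrass canonical form for an $H$-selfadjoint matrix (Chapter 5 of \cite{GLR83}), so there is no in-paper argument to compare yours against. Judged on its own merits, your single-block computations are sound: the identity $[t_{i-1},t_j]_H=[t_i,t_{j-1}]_H$, valid for real $\lambda$, does force the Gram matrix to be Hankel, vanishing on one side of the anti-diagonal with a constant real $c\neq 0$ on it; the upper-triangular Toeplitz group does commute with a single Jordan block, so the degree-by-degree normalization of the chain closes; and for a nonreal $\lambda_k$ the form vanishes identically on the root subspace $R_{\lambda_k}$ (apply $R_{\lambda}\perp_H R_{\mu}$ unless $\mu=\overline{\lambda}$ with $\mu=\lambda$), so the only surviving pairing is between the two chains and the sign can indeed be absorbed into the freely chosen dual chain.

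The genuine gap is in your opening reduction. The claim that the $H$-orthogonal decomposition into root subspaces ``refines so that distinct Jordan blocks sit in mutually $H$-orthogonal pieces'' is not an independent standard fact that can be quoted before proving the theorem --- for a real eigenvalue of geometric multiplicity greater than one, it \emph{is} the theorem, or at least its entire difficulty. Nondegeneracy of $H$ on the root subspace $R_{\lambda_k}$ does not a priori split $R_{\lambda_k}$ into $H$-nondegenerate, mutually $H$-orthogonal summands each carrying one chain. The standard route is an induction: show that among generators of chains of maximal height $m$ there exists $x$ with $[x,(A-\lambda)^{m-1}x]_H\neq 0$ (this uses nondegeneracy of $H$ on $R_{\lambda_k}$ together with maximality of $m$, and is the step that genuinely requires an argument), normalize that single chain exactly as you do, check that its span is $H$-nondegenerate, and recurse on its $H$-orthogonal complement, which is again $A$-invariant. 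Without this induction your argument covers only the case of one Jordan block per real eigenvalue and one per conjugate pair; with it, your normalization lemma becomes the correct inductive step. Either supply that induction, or state plainly that you are citing the full block-orthogonal decomposition --- in which case you are citing the proposition itself, which is what the paper does.
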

\textit{Why flipped orthogonal?}

If $t_i,t_j$ are the $i$th, $j$th columns of $T$ respectively, then
$$[t_i,t_j]_H=[e_i,e_j]_P.
$$
In other words, if we partition $T$ in the following way
	\begin{displaymath}
	T= [T_1  \ldots T_{\alpha}\ | T_{\alpha+1} \ldots T_{\beta} ],
	\end{displaymath}
 for  $k=1,\ldots, \alpha$ the columns of 
		\begin{displaymath}
	T_k=[g_{0,k}, \ldots, g_{{p_k}-1,k}]
	\end{displaymath}
	form the Jordan chain of $A$ corresponding to real $\lambda_k$. Similarly, the columns of
		\begin{displaymath}
	T_k=[\underbrace{g_{0,k}, \ldots, g_{{p_k}-1,k}}_{\lambda_k}\ | \underbrace{h_{0,k},\ldots,h_{{p_k}-1,k}}_{\overline{\lambda}_k} ]
	\end{displaymath}
	for $k=\alpha+1,\ldots, \beta$ form two Jordan chains, corresponding to $\lambda_k$ and ${\overline{\lambda}_k}$ respectively. Then the structure of the signature  matrix $P$ implies the following orthogonality relation,
	\begin{displaymath}
	g_{ik}^*Hg_{jm} =0, \;\; \;\;
	h_{ik}^*Hg_{jm} =0, \;\; \;\;
	h_{ik}^*Hh_{jm} =0, \;\; \text{ for }\;\; k \neq m.
	\end{displaymath}
	Further, we have that
	\begin{displaymath}
	g_{i,k}^*Hg_{j,k} = \left \{  \begin{array}{ll}
	\epsilon_k, \quad j=p_{k} -1-i\\ 
	0 \quad \text{otherwise}
	\end{array} \right.  \qquad \text{for}  \quad k=1,\ldots,\alpha
	\end{displaymath}
	\begin{displaymath}
	\left \{  \begin{array}{ll}
	g_{ik}^*Hg_{jk} =0,\\
	h_{ik}^*Hh_{jk} =0,\\ 
	h_{i,k}^*Hg_{j,k} = \left \{  \begin{array}{ll}
	1, \quad j=p_{k} -1-i\\ 
	0 \quad \text{otherwise}
	\end{array} \right. 
	\end{array} \right.  \qquad \text{for}  \quad k=\alpha+1,\ldots,\beta.
	\end{displaymath}
 In terms of our example it means that
 $$
\begin{bmatrix}
[t_1,t_1]_H&[t_1,t_2]_H&[t_1,t_3]_H&[t_1,t_4]_H\\
[t_2,t_1]_H&[t_2,t_2]_H&[t_2,t_3]_H&[t_2,t_4]_H\\
[t_3,t_1]_H&[t_3,t_2]_H&[t_3,t_3]_H&[t_3,t_4]_H\\
[t_4,t_1]_H&[t_4,t_2]_H&[t_4,t_3]_H&[t_4,t_4]_H\end{bmatrix}
    =$$
    $$\begin{bmatrix}
[e_1,e_1]_P&[e_1,e_2]_P&[e_1,e_3]_P&[e_1,e_4]_P\\
[e_2,e_1]_P&[e_2,e_2]_P&[e_2,e_3]_P&[e_2,e_4]_P\\
[e_3,e_1]_P&[e_3,e_2]_P&[e_3,e_3]_P&[e_3,e_4]_P\\
[e_4,e_1]_P&[e_4,e_2]_P&[e_4,e_3]_P&[e_4,e_4]_P\end{bmatrix}
    =\begin{bmatrix}
0&0&0&1\\
0&0&1&0\\
0&1&0&0\\
1&0&0&0\end{bmatrix}=P.
 $$

As we showed by our example in Introduction -- not every Jordan basis is necessarily flipped-orthogonal. However, the above proposition yields that the flipped orthogonal basis always exists for $H$-selfadjoint matrices. In fact, in view of the structure of $P$, it is captured by the columns of $T$.

To show that $\gamma$-FOCS bases exist, we need to introduce yet another canonical form.

\subsection{$\gamma$-CS Bases}

Revising example in \eqref{eq:R}, one question arises.

\textit{Why do the columns of $R$ from our example in Introduction capture the Jordan basis?}

For any real matrix $A$, if $\lambda$ is its non-real eigenvalue, then so is $\overline{\lambda}$. Moreover, if $g_k\to g_{k-1}\to \ldots\to g_0\to0$ is a Jordan chain corresponding to $\lambda$ then 
$$
\gamma \overline{q_{k-1}}=\gamma\overline{(A-\lambda I)q_k}={(\overline{A}-{\overline{\lambda}} \overline{I}){\gamma\overline{q_k}}}={({A}-{\overline{\lambda}} {I}){\gamma\overline{q_k}}},
$$
i.e. $\gamma\overline{g}_k\to\gamma \overline{g}_{k-1}\to \ldots\to \gamma\overline{g}_0\to0$  is also a Jordan chain of $A$ but corresponding to $\overline{\lambda}$ for any non-zero $\gamma \in \mathbb{C}$.

This observation leads to the following definition.

	Suppose $A\in \mathbb{R}^{ n \times n} $ and that there exists an invertible matrix $N$ such that $A=NJN^{-1}$, where 
	\begin{equation*}
	J=J(\lambda_{1})\oplus\cdots \oplus J(\lambda_{\alpha})\oplus \widehat{J}(\lambda_{\alpha+1})\oplus \cdots \oplus \widehat{J}(\lambda_{\beta})
	\end{equation*}is the Jordan canonical form and $\widehat{J}(\lambda_k)$ defined in \eqref{Jk}.
	The matrix
	\begin{equation}{\label{N}}
	N=[N_1 \lvert \ldots \lvert N_{\alpha} \lvert {N}_{\alpha+1} \lvert \ldots \lvert {N}_{\beta}]
	\end{equation}
	can be chosen that
	\begin{equation}\label{eq:t1}
	N_{k}=[Q_{k} \lvert \gamma\overline{Q}_{k}] 
	\end{equation} 
	for any $\gamma\neq0$ and $k=\alpha+1,\ldots,\beta$. The columns of $N$ capture the Jordan basis of $A$, and because of \eqref{eq:t1} we call it the \textbf{$\gamma$-conjugate symmetric basis}.

In this paper we show that for any real $H$-selfadjoint matrix $A$ there exist a Jordan basis that is simultaneously FO and $i$-CS.  For this we need to consider the real canonical form.

\subsection{{Real Canonical Form} }

The following well-known  result (see~\cite[Theorem 6.1.5]{GLR83}) describes a purely real relation

\begin{equation}\label{eq:mapsR}
(A,H)\overset{R}{\mapsto}(J_R,P)
\end{equation}
where all five matrices are real.

\begin{proposition}\label{thm:real main}
 Let ${A},H \in\mathbb{R}^{n \times n}$  be given with $H=H^\top$ and $A$ being $H$-selfadjoint matrix. Let $J_R$ be its real Jordan form
	\begin{equation}\label{eq:real jordan}
	J_R= J(\lambda_1)\oplus \cdots \oplus J(\lambda_\alpha)\oplus \widehat{J}_R(\lambda_{\alpha+1})\oplus \cdots \oplus\widehat{J}_R(\lambda_\beta),
	\end{equation}
where $\lambda_1,\ldots,\lambda_\alpha\in\mathbb{R}$,  $\lambda_{\alpha+1},\ldots,\lambda_\beta\notin\mathbb{R}$,  $J(\lambda_i)$ is a  Jordan  block for real eigenvalues $\lambda_1,\ldots,\lambda_{\alpha}$ and 
		\begin{equation*}
	\widehat{J}_R(\lambda_k)=
	{
		\left [ \begin{smallmatrix}
		\sigma_k & \tau_k   & 1        &      0 & 0    & \cdots&      & \cdots&  & &0 \\
		-\tau_k  & \sigma_k & 0        &      1 & 0    &     \ddots  &      &       &  & &  \\ 
		0& 0        & \sigma_k & \tau_k & 1    &\ddots &   \ddots   &       &  & &\vdots\\ 
		0 & 0   &-\tau_k    &\sigma_k  &0&1 &\ddots&\ddots& &  \\
		&  \ddots        &  \ddots  & \ddots &\ddots  &\ddots&\ddots &\ddots &\ddots & & \vdots\\
		\vdots  &  &\ddots  &\ddots & \ddots&\ddots   & \ddots&\ddots &\ddots & \ddots&\\
		&  &  & \ddots&\ddots & \ddots    & \ddots& \ddots& \ddots & \ddots&  0\\
		&  &  &  &\ddots& \ddots&   0&\sigma_k & \tau_k&1 &0\\
		\vdots &  &  & & &   \ddots &\ddots &-\tau_k & \sigma_k & 0&1\\
		&  &  & & & &    \ddots &0 & 0 &\sigma_k &\tau_k\\[0.6em]
		0 &  & \ldots & &  &\ldots &  & 0& 0 &-\tau_k &\sigma_k\\
		\end{smallmatrix} \right]},
	\end{equation*} where $\lambda_k=\sigma_k+i\tau_k$. Then there exists an invertible real matrix $R$ such that $(A,H)\overset{R}{\mapsto}(J_R,P)$ where
	$P$ is a sip matrix of the form 
	\begin{displaymath}
	P=P_1 \oplus\cdots \oplus P_{\alpha} \oplus {P}_{\alpha+1} \oplus \cdots \oplus{P}_{\beta},
	\end{displaymath}
	where $P_k$ is a sip matrix $\epsilon_k \widetilde{I}$ of the same size as $J({\lambda_k})$ for $ k=1,\ldots,\alpha$ and a sip matrix $\widetilde{I}$ of the same size as  $\hat{J}_R({\lambda_k})$ for $k=\alpha+1,\ldots,\beta$ and $\epsilon_{k}=\pm{1}$ for $k=1,\ldots,\alpha$. 
\end{proposition}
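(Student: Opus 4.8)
The plan is to handle the blocks one eigenvalue (or one conjugate pair) at a time, and then to obtain each block directly for real eigenvalues and by ``realifying'' the complex flipped-orthogonal form of Proposition~\ref{thm:main1} for the non-real ones.

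First I would record that the decomposition of $\mathbb{R}^n$ into real generalized eigenspaces is $H$-orthogonal. For $\mu\in\sigma(A)$ put $\mathcal{E}_\mu=\kernel(A-\mu I)^n$; the defining identity $[Ax,y]_H=[x,Ay]_H$ gives $[(A-\mu I)^a v,w]_H=[v,(A-\overline{\mu}I)^a w]_H$, and since $A-\overline{\mu}I$ is invertible on $\mathcal{E}_\nu$ whenever $\nu\neq\overline{\mu}$, this forces $[\mathcal{E}_\mu,\mathcal{E}_\nu]_H=0$ unless $\nu=\overline{\mu}$. Hence $\mathcal{E}_\mu$ (for real $\mu$) and $(\mathcal{E}_\mu\oplus\mathcal{E}_{\overline\mu})\cap\mathbb{R}^n$ (for a non-real pair, using $\mathcal{E}_{\overline\mu}=\overline{\mathcal{E}_\mu}$) are real, $A$-invariant, $H$-nondegenerate, mutually $H$-orthogonal, and add up to $\mathbb{R}^n$. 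Because an $H$-orthogonal direct sum of pairs of the form $(J_R,P)$ is again of that form (the glued change of basis being block diagonal), it suffices to treat $A$ with a single real eigenvalue, or with a single conjugate pair.

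For a single real eigenvalue $\lambda$ we have $A=\lambda I+N$ with $N$ real, nilpotent and $H$-selfadjoint, and the real Jordan block coincides with the usual one. Here I would invoke the classical real canonical form for nilpotent $H$-selfadjoint operators, or equivalently replay the standard descending-chain argument with \emph{real} data: a longest Jordan chain can be started from a real vector, the leading pairing is then real and becomes $\epsilon_k=\pm1$ after a real rescaling, the $H$-orthogonal complement of the block is real, $N$-invariant and $H$-nondegenerate, and one induces. Everything stays inside $\mathbb{R}^n$, producing a real $R_k$ with $R_k^{-1}AR_k=J(\lambda)$ and $R_k^{\top}HR_k=\epsilon_k\widetilde{I}$. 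For a single conjugate pair $\lambda=\sigma+i\tau$, $\overline{\lambda}$, I would apply Proposition~\ref{thm:main1} to get a complex $T=[\,G\mid\Gamma\,]$ with $G$ a $\lambda$-chain, $\Gamma$ a $\overline\lambda$-chain, $G^*HG=\Gamma^*H\Gamma=0$ and $\Gamma^*HG=\widetilde{I}$. By the conjugation observation recalled before~\eqref{eq:t1}, $\overline{G}$ is another $\overline\lambda$-chain of the same length, so $\overline{G}=\Gamma U$ for an invertible triangular Toeplitz $U$; using the freedom left in the flipped-orthogonal normalization of $G$ I would arrange $\Gamma=\overline{G}$. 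Then I would exhibit a fixed complex matrix $W$ which simultaneously conjugates $J(\lambda)\oplus J(\overline\lambda)$ into $\widehat{J}_R(\lambda)$, makes $R_k:=[\,G\mid\overline{G}\,]W$ real, and carries the $H$-pairing to $\widetilde{I}$; concretely $W$ records real and imaginary parts of the $g_j$'s and then applies a real rotation from the centralizer of $\widehat{J}_R(\lambda)$, chosen so that the resulting real symmetric Gram matrix becomes the (\emph{unsigned}) sip matrix. No $\pm1$ survives, since the $H$-form on a conjugate-pair block is split.

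The step I expect to be the main obstacle is the conjugate-pair case, specifically producing the matrix $W$: one must check that the flipped-orthogonal normalization of $G$ is compatible with the reality requirement $\Gamma=\overline{G}$ (a triangular-Toeplitz computation when $p>1$) and that the auxiliary real rotation can be taken inside the centralizer of $\widehat{J}_R(\lambda)$ so as to normalize the Gram matrix to $\widetilde{I}$ without leaving a residual sign. The rest is assembly: $R:=R_1\oplus\cdots\oplus R_\beta$, conjugated back by the block-splitting change of basis, is a real invertible matrix with $(A,H)\overset{R}{\mapsto}(J_R,P)$, which is the assertion; alternatively one may simply quote \cite[Theorem~6.1.5]{GLR83}.
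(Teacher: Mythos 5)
The paper itself does not prove this proposition: it is presented as a known result, quoted directly from \cite[Theorem 6.1.5]{GLR83}, so your closing remark (``one may simply quote [GLR83]'') is in fact exactly what the authors do. Your reconstruction of the underlying argument is sound and follows the standard route: the $H$-orthogonality of root subspaces $\mathcal{E}_\mu,\mathcal{E}_\nu$ except when $\nu=\overline{\mu}$ is correct, the reduction to a single real eigenvalue or a single conjugate pair is legitimate, and the real-eigenvalue case is the classical sign-characteristic argument carried out over $\mathbb{R}$. For the conjugate-pair case, the two points you flag as obstacles do go through: the freedom in the chain $G$ is multiplication by an invertible upper triangular Toeplitz matrix $U$, the Gram matrix $G^{\top}HG$ is an invertible Hankel matrix of the form $\widetilde{I}C$ with $C$ upper triangular Toeplitz, and since $\widetilde{I}U^{\top}\widetilde{I}=U$ for such $U$, arranging $\Gamma=\overline{G}$ with pairing $\widetilde{I}$ amounts to solving $U^{2}=C^{-1}$ in the algebra $\mathbb{C}[t]/(t^{p})$, which is always possible over $\mathbb{C}$ --- this is precisely why no $\pm 1$ survives for non-real eigenvalues. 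The matrix $W$ you anticipate needing is, up to ordering, the matrix $S_j$ (or $S_j^{-1}$) that the paper introduces later in Lemma~\ref{thm:focs-rc}: the identities $S^{-1}J_RS=J$ and $S^{*}PS=P$ verified there are exactly the statements that $W$ realifies $[G\mid\overline{G}\,]$, converts $J(\lambda)\oplus J(\overline{\lambda})$ into $\widehat{J}_R(\lambda)$, and preserves the sip Gram matrix; no additional rotation in the centralizer is actually needed once $\overline{G}^{*}HG=\widetilde{I}$ is achieved, since the full $2p\times 2p$ Gram matrix is then already the sip matrix. So your plan supplies a self-contained proof where the paper supplies only a citation; what the citation buys is brevity, and what your argument buys is an explicit link between Proposition~\ref{thm:main1} and the real form that the paper only establishes afterwards, in the opposite direction, via $S$.
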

Suppose that $A \in \mathbb{R}^{n \times n}$ and $J_R$ is the real Jordan form in \eqref{eq:real jordan}. We say that columns of matrix $R$ in \eqref{eq:mapsR} form a {\bf real canonical basis} of $A$.

Finally, we are ready to prove the main result.


\section{Existence of $i$-FOCS Bases}
The FO basis was defined for pairs of matrices $(A,H),$ where $A$ and  $H$ are not necessarily real. The $\gamma$-CS basis is defined  for the case of   $A$ being a real matrix.
\textit{So what about real $H$-selfadjoint matrices?}

Proposition {\ref{thm:main1}} implies the existence of the FO basis for $(A,H)$, and the latter, generally, does not have the CS property.

\begin{theorem}{\bf (Existence of  an $i$-FOCS basis)} \label{thm:focs1} Let $A$ be a real $H$-selfadjoint matrix where $H$ is real, invertible, and Hermitian. Then $N$ in $(A,H)\overset{N}{\mapsto}(J,P)$ can be chosen $i$-conjugate symmetric and flipped orthogonal at the same time.
\end{theorem}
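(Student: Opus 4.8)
The plan is to start from the real canonical relation $(A,H)\overset{R}{\mapsto}(J_R,P)$ of Proposition~\ref{thm:real main}, which already supplies a real basis that is ``flipped orthogonal'' with respect to the real Jordan form $J_R$, and then to apply a single block-constant change of basis that simultaneously (i) converts each real-eigenvalue block trivially, (ii) converts each $\widehat J_R(\lambda_k)$ block into the complex form $\widehat J(\lambda_k)=J(\lambda_k)\oplus J(\overline\lambda_k)$, and (iii) does so by a matrix of the shape $[Q_k \mid i\overline{Q}_k]$, so that the $i$-conjugate-symmetric structure of \eqref{eq:t1} is built in. The key point is that this conversion matrix must also preserve the sip matrix $P$ under congruence, i.e. the change-of-basis block $S_k$ must satisfy $S_k^*\widetilde I\,S_k=\widetilde I$ on the non-real blocks (and likewise on the real blocks), so that the resulting pair is $(J,P)$ with exactly the same $P$.

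Concretely, I would first treat the non-real blocks in isolation. For a single pair block of size $2p$, there is a fixed complex similarity $C$ (independent of $A,H$, depending only on $p$ and on $\lambda_k=\sigma_k+i\tau_k$) that transforms $\widehat J_R(\lambda_k)$ into $J(\lambda_k)\oplus J(\overline\lambda_k)$; this is the standard real-to-complex Jordan conversion, and one can choose its two halves to be complex conjugates of each other up to a scalar, because the two target blocks are themselves conjugate (this is exactly the observation made before \eqref{eq:t1}: $\gamma\overline g_j$ is a Jordan chain for $\overline\lambda_k$ whenever $g_j$ is one for $\lambda_k$). I would then compute $C^*\widetilde I\,C$: this will be some invertible Hermitian matrix that anticommutes appropriately with the permuted structure, and I would post-compose $C$ with a correcting factor — an upper-triangular Toeplitz matrix commuting with the Jordan block, together with a diagonal scaling — chosen to restore $C^*\widetilde I\,C=\widetilde I$. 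Such a correction exists because the group of matrices commuting with a single Jordan block acts transitively enough on the admissible sip matrices (this is precisely the freedom described as ``uniquely defined up to perturbation of the Jordan blocks'' in the discussion after \eqref{JT}); fixing the scalar $\gamma$ to be $i$ is then the step that pins down the conjugate-symmetric normalization. On the real-eigenvalue blocks no conversion is needed, so $S_k=I_{p_k}$ works there and leaves $\epsilon_k\widetilde I_k$ untouched.

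Assembling $S=S_1\oplus\cdots\oplus S_\alpha\oplus S_{\alpha+1}\oplus\cdots\oplus S_\beta$ and setting $N=RS$, I would verify the three conclusions in turn: $N^{-1}AN=S^{-1}J_RS=J$ block by block; $N^*HN=S^*(R^*HR)S=S^*PS=P$ because each $S_k$ was built to preserve its sip block; and the columns of $N_k=R_kS_k$ have the form $[Q_k\mid i\overline Q_k]$ because $R_k$ is real and the two halves of $S_k$ are conjugate up to the factor $i$, hence $N$ is $i$-conjugate symmetric and, carrying $P$ into $P$, flipped orthogonal. I expect the main obstacle to be step (ii)–(iii) of the block computation: showing that the explicit real-to-complex conversion matrix can be normalized so that it is \emph{both} sip-preserving \emph{and} of the conjugate-symmetric shape with $\gamma=i$ — in particular checking that the two requirements are compatible and do not force $\gamma$ to be real. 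Getting the scalar exactly equal to $i$ (rather than merely to some unit-modulus constant) is where the real structure of $A$ and $H$ has to be used carefully, and it is the crux of why the theorem is stated for $i$-CS specifically.
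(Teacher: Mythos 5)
Your proposal follows essentially the same route as the paper: pass to the real canonical pair $(J_R,P)$ of Proposition~\ref{thm:real main} and then apply a fixed block-diagonal matrix $S$ (identity on the real-eigenvalue blocks) that simultaneously complexifies each $\widehat J_R(\lambda_k)$, preserves the sip matrix under congruence, and has its two column halves conjugate up to the factor $i$, so that $N=RS$ is $i$-FOCS. The step you flag as the crux is resolved in the paper (Lemma~\ref{thm:focs-rc}) not by a correction-factor argument but simply by exhibiting the explicit column-interleaving matrix $S_j$ with entries $1,i$ scaled by $1/\sqrt2$ and verifying $JS=SJ_R$ and $S^{*}PS=P$ by direct computation.
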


To establish this result, let us discuss the relation between RC  and $i$-FOCS bases. 
As we will see, for an arbitrary real pair $(A,H)$, the relation between $i$-FOCS and RC bases is carried over with the help of the same fixed matrix $S$.
\begin{lemma}\label{thm:focs-rc}  Let $A,H \in \mathbb{R}^{n \times n}$, $H=H^\top$, and $A$ is $H$-selfadjoint, and let the columns of  $R$  capture the RC basis of $A$. 
	\begin{equation}\label{eq:r}
	R=[N_1, \ldots, N_{\alpha} \lvert K_{\alpha+1}, \ldots, K_{\beta}]
	\end{equation}
	capture the RC basis of $A$. 
	Then for  $S=\diag({I_1,\ldots,I_{\alpha} | S_{\alpha+1},\ldots,S_{\beta}})$, where the explicit formula for $S_j$ is:
	\begin{displaymath}
	S_j=\frac{1}{\sqrt{2}}
	\left [ \begin{array}{ccccccccccccc}
	1&0&\cdots &0&i&0 &\cdots&0\\
	i&0&{}&\vdots&1&0&{}&\vdots\\\hline
	0&1&\ddots&0 &0&i&\ddots&0\\
	0&i&\ddots&\vdots&  0 &1&\ddots&\vdots \\\hline
	\vdots&\ddots &\ddots&0&\vdots&\ddots &\ddots&0\\\hline
	\vdots&\ddots &\ddots&1&\vdots&\ddots &\ddots&i\\
	0&\cdots & 0&i&0&\cdots&0 & 1\\
	\end{array} \right],
	\end{displaymath} 
	the columns of $N=RS$ capture the {\it{i}}-FOCS basis of $A$.
	\end{lemma}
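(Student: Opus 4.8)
The plan is to verify directly that $N = RS$ has the three required properties: (a) it is a Jordan basis realizing $(A,H)\mapsto(J,P)$ with the complex Jordan form $J$ of \eqref{J}, (b) it is $i$-conjugate symmetric, and (c) it is flipped orthogonal, i.e. $N^*HN = P$. Since $S$ is block diagonal and acts as the identity on the blocks $N_1,\dots,N_\alpha$ corresponding to the real eigenvalues, all three claims are trivial there; the entire content is in the pair-of-conjugate-blocks $K_j \mapsto K_j S_j$ for $j=\alpha+1,\dots,\beta$. So I would fix one such $j$, drop the subscript, and work with the single real Jordan block $\widehat J_R(\lambda)$ of size $2p$, its real canonical basis $K = [u_0,v_0,u_1,v_1,\dots,u_{p-1},v_{p-1}]$ (so that the $u$'s and $v$'s interleave as dictated by the $2\times 2$ real block structure of $\widehat J_R$), the $2p\times 2p$ sip block $\widetilde I$, and the $2p\times 2p$ matrix $S_j$ displayed in the lemma.

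First I would check the Jordan-chain / similarity part: the key identity is $S_j^{-1}\widehat J_R(\lambda)\, S_j = \widehat J(\lambda) = J(\lambda)\oplus J(\overline\lambda)$, which is the standard complexification of a real Jordan block. Concretely, the $k$-th pair of columns of $KS_j$ is $\tfrac{1}{\sqrt 2}(u_k + i v_k)$ together with $\tfrac{1}{\sqrt 2}(i u_k + v_k) = \tfrac{i}{\sqrt 2}(u_k - i v_k)$; setting $q_k = \tfrac{1}{\sqrt 2}(u_k+iv_k)$, one shows $(A-\lambda I)q_k = q_{k-1}$ using the defining relations of the real Jordan chain, so the first $p$ columns of $KS_j$ form a Jordan chain for $\lambda$ and the last $p$ columns are exactly $i\,\overline{q}_0,\dots,i\,\overline{q}_{p-1}$, a Jordan chain for $\overline\lambda$. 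This simultaneously gives (a) on this block and (b) with $\gamma = i$: the block has the form $[Q \mid i\overline Q]$ required by \eqref{eq:t1}. (Summing over $j$ and recalling $S$ is the identity on real blocks gives $S^{-1}J_R S = J$ globally and the $i$-CS form of $N=RS$.)

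The flipped-orthogonality computation is the step I expect to be the real obstacle, though it is still a finite check. By Proposition~\ref{thm:real main} we already have $R^\top H R = P$ (the real sip matrix), and since $N = RS$ and $H$ is real symmetric we get $N^*HN = S^* R^\top H R\, S = S^* P\, S$. So everything reduces to the purely combinatorial identity $S_j^*\, \widetilde I\, S_j = \widetilde I$ for each conjugate block — i.e. that $S_j$ is "unitary with respect to the sip form," conjugate-transpose version. This I would verify by a direct block-entry computation: pairing column $2a{+}b$ of $S_j$ (with $b\in\{0,1\}$ indicating the $1$-component vs. $i$-component) against column $2c{+}d$ via $\widetilde I$ picks out, up to the reversal induced by $\widetilde I$, the overlap of the nonzero pattern of these two columns; the $\tfrac{1}{\sqrt2}$ normalization and the specific placement of $1$'s and $i$'s is exactly arranged so that the anti-diagonal pairs contribute $1$ and everything else cancels (the two cross terms $1\cdot i$ and $i\cdot 1$ kill each other, and $\bar i \cdot i = 1$ on the diagonal-of-the-pattern terms). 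Concretely it amounts to checking $\overline{(S_j)_{\cdot,\mu}}^{\!\top}\widetilde I\,(S_j)_{\cdot,\nu} = (\widetilde I)_{\mu\nu}$ for all $\mu,\nu$, which is a bounded number of scalar identities once the column patterns are written out. Assembling the blocks, $S^*PS = P$, hence $N^*HN = P$, and together with (a),(b) this proves that $N = RS$ captures an $i$-FOCS basis, completing the lemma; feeding this into Proposition~\ref{thm:main1}/Proposition~\ref{thm:real main} then yields Theorem~\ref{thm:focs1}.
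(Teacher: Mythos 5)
Your proposal is correct and follows essentially the same route as the paper: establish the complexification identity $S^{-1}J_RS=J$ (equivalently, that the columns of $KS_j$ are the chains $q_k=\tfrac{1}{\sqrt2}(u_k+iv_k)$ and $i\overline{q}_k$, which gives both the Jordan-basis and the $i$-CS properties), and then reduce flipped orthogonality to the finite check $S^*PS=P$, which the paper verifies in the equivalent form $PS^*PS=I$. The only differences are bookkeeping ones — you argue blockwise and entrywise where the paper writes the corresponding global matrix identities — so no comparison beyond that is needed.
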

	
The next result is the converse to the one in Lemma \ref{thm:focs-rc}
\begin{lemma} If the columns of
	\begin{equation}\label{eq:t}
	N=[N_1, \ldots, N_{\alpha} \lvert N_{\alpha+1}, \ldots, N_{\beta}] \text{  and }
	N_k=[Q_k \lvert i\overline{Q}_k], \quad k=\alpha+1, \ldots, \beta
	\end{equation}
	capture the {\it{i}}-FOCS basis of $(A,H)$. 
	
	Further, note that $S^{-1}=\diag({I_1,\ldots,I_{\alpha} | S^{-1}_{\alpha+1},\ldots,S^{-1}_{\beta}})$, where 
	
	\begin{displaymath}
	S_j^{-1}=\frac{1}{\sqrt{2}}
	\left [ \begin{array}{ccccccccc}
	1&-i&0&0&\cdots& \cdots &0\\
	0&0&1&-i&0&\cdots & 0\\
	\vdots & \ddots   & \ddots& \ddots & \ddots & \ddots &\vdots\\
	\vdots & {}   & \ddots& \ddots & -i& 0 &0\\
	0&\cdots&\cdots& 0&0&1&-i\\
	-i&1&0&0&\cdots& \cdots &0\\
	0&0&-i&1&0&\cdots & 0\\
	\vdots & \ddots   & \ddots& \ddots & \ddots & \ddots &\vdots\\
	\vdots & {} & \ddots& \ddots & 1& 0 &0\\
	0& \cdots&\cdots& 0&0&-i&1\\
	\end{array} \right]\\.
	\end{displaymath}Then the matrix 
	
	\begin{equation}\label{eq:R}
	R=NS^{-1}
	\end{equation}is real and its columns  
	capture the RC basis of $A$.  
	Moreover,  from \eqref{eq:r} we get $K_{j}=N_{j}S_j^{-1}$ for $j=\alpha+1, \ldots, \beta$. 
	
\end{lemma}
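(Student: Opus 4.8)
The plan is to read Lemma~\ref{thm:focs-rc} backwards. By the definition of a real canonical basis, the columns of $R$ capture the RC basis of $A$ as soon as $R$ is real, invertible, and $(A,H)\overset{R}{\mapsto}(J_R,P)$; so it suffices to check three things: (a) $R=NS^{-1}$ is real; (b) $R^{-1}AR=J_R$ and $R^{\ast}HR=P$; and (c) the complex column blocks of $R$ are $K_j=N_jS_j^{-1}$. Recall that ``$N$ captures the $i$-FOCS basis'' means $(A,H)\overset{N}{\mapsto}(J,P)$ (the flipped-orthogonal property of Proposition~\ref{thm:main1}) together with $N_k=[\,Q_k\mid i\overline Q_k\,]$ for $k=\alpha+1,\dots,\beta$ and $N_k$ a real Jordan chain for $k\le\alpha$. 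Write the columns of $Q_j$ as $q^{(j)}_1,\dots,q^{(j)}_{p_j}$, so that the $2p_j$ columns of $N_j$ are $q^{(j)}_1,\dots,q^{(j)}_{p_j},i\overline{q^{(j)}_1},\dots,i\overline{q^{(j)}_{p_j}}$.

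First I would dispatch (a) and (c) together. Since $S^{-1}=\diag(I_1,\dots,I_\alpha\mid S^{-1}_{\alpha+1},\dots,S^{-1}_\beta)$ is block diagonal conformally with the column blocks of $N$, right multiplication by $S^{-1}$ recombines columns only within each block, so $R=[\,N_1,\dots,N_\alpha\mid N_{\alpha+1}S^{-1}_{\alpha+1},\dots,N_\beta S^{-1}_\beta\,]$; comparing with \eqref{eq:r} identifies $K_j=N_jS_j^{-1}$, which is (c), and the first $\alpha$ blocks are real by hypothesis. For $j>\alpha$, from the explicit formula for $S_j^{-1}$ its columns $2c-1$ and $2c$ are $\tfrac{1}{\sqrt2}(e_c-i\,e_{p_j+c})$ and $\tfrac{1}{\sqrt2}(-i\,e_c+e_{p_j+c})$ for $c=1,\dots,p_j$, whence
\begin{align*}
N_jS_j^{-1}e_{2c-1}&=\tfrac{1}{\sqrt2}\bigl(q^{(j)}_c-i\cdot i\,\overline{q^{(j)}_c}\bigr)=\sqrt2\,\mathrm{Re}\,q^{(j)}_c,\\
N_jS_j^{-1}e_{2c}&=\tfrac{1}{\sqrt2}\bigl(-i\,q^{(j)}_c+i\,\overline{q^{(j)}_c}\bigr)=\sqrt2\,\mathrm{Im}\,q^{(j)}_c.
\end{align*}
Thus every column of $K_j=N_jS_j^{-1}$ is real, so $R$ is real, which is (a); moreover the columns of $K_j$ appear in the order $\sqrt2\,\mathrm{Re}\,q^{(j)}_1,\sqrt2\,\mathrm{Im}\,q^{(j)}_1,\sqrt2\,\mathrm{Re}\,q^{(j)}_2,\dots$, which is exactly the column pattern of a real Jordan chain of the block $\widehat J_R(\lambda_j)$, as it must be for (b).

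For (b) I would combine the flipped-orthogonal property of $N$, i.e. $N^{-1}AN=J$ and $N^{\ast}HN=P$, with the two matrix identities $SJS^{-1}=J_R$ and $S^{\ast}PS=P$. These identities involve only the fixed canonical matrices and $S$: they hold block-trivially on the real blocks (where $S_k=I$ and $P_k=\epsilon_k\widetilde I$), and on each complex block they amount to the standard facts that $S_j$ conjugates $\widehat J(\lambda_j)$ into $\widehat J_R(\lambda_j)$ and that $S_j^{\ast}\widetilde I\,S_j=\widetilde I$; equivalently, applying Lemma~\ref{thm:focs-rc} to any fixed RC basis $R_0$ (which exists by Proposition~\ref{thm:real main}) and comparing the two affiliations $(A,H)\overset{R_0}{\mapsto}(J_R,P)$ and $(A,H)\overset{R_0S}{\mapsto}(J,P)$ yields them directly. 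Substituting $R=NS^{-1}$ then gives $R^{-1}AR=S(N^{-1}AN)S^{-1}=SJS^{-1}=J_R$ and $R^{\ast}HR=(S^{-1})^{\ast}(N^{\ast}HN)S^{-1}=(S^{-1})^{\ast}PS^{-1}=(S^{-1})^{\ast}(S^{\ast}PS)S^{-1}=P$. Hence $(A,H)\overset{R}{\mapsto}(J_R,P)$ with $R$ real and invertible, so the columns of $R$ capture the RC basis of $A$, establishing \eqref{eq:R} together with $K_j=N_jS_j^{-1}$.

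I do not expect a genuine obstacle. The only computation with content is the column-by-column evaluation of $N_jS_j^{-1}$ above, where the $i$-conjugate-symmetric pairing $[\,Q_j\mid i\overline Q_j\,]$ collapses the $\{1,-i\}$ pattern of $S_j^{-1}$ into the $\mathrm{Re}$ and $\mathrm{Im}$ parts of a single chain of $\lambda_j$; the affiliation step (b) is bookkeeping resting on the very $S_j$-identities that already appear in Lemma~\ref{thm:focs-rc}. The one point demanding care is purely a matter of convention: ``$i$-FOCS basis'' must be understood to include that the Jordan chains attached to real eigenvalues are real, for otherwise $R=NS^{-1}$ need not be real.
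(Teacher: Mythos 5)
Your proposal is correct and follows essentially the same route as the paper, which disposes of this converse in one line by invoking the affiliation $(J,P)\overset{S^{-1}}{\mapsto}(J_R,P)$ and ``a similar argument'' to the forward lemma. The only added value in your write-up is that you make explicit the column-by-column computation $N_jS_j^{-1}e_{2c-1}=\sqrt{2}\,\mathrm{Re}\,q^{(j)}_c$, $N_jS_j^{-1}e_{2c}=\sqrt{2}\,\mathrm{Im}\,q^{(j)}_c$ verifying that $R$ is real (the paper leaves this implicit in its remark that multiplication by $S$ implements $a+ib\mapsto ia+b$), together with the sensible caveat that the real-eigenvalue chains of an $i$-FOCS basis must be taken real.
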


\begin{proof}[Proof of Lemma \ref{thm:focs-rc}]
	First, let us show that the columns of  matrix $N$ in \eqref{eq:R} capture a basis  of $A$, then that this basis is $i$-CS . 	 Without loss of generality we  consider the case of a single real Jordan block, assuming that $\sigma(A)=\big\{\lambda, \overline{\lambda}\big\}$.
		The relation $S^{-1}{J_R}S={J}$  follows from		
	\begin{displaymath}{J}S=\frac{1}{\sqrt{2}}
	{
		\left [ \begin{smallmatrix}
		\sigma+i\tau&1 &0 &\dots&0&i\sigma+\tau&i &0 &\dots&0\\
		-\tau+i\sigma&i&0&\dots&0 &-i\tau+\sigma&1&0&\dots&0 \\
		0&\sigma+i\tau&1 &\dots&0&0&i\sigma+\tau&i &\dots&0\\
		0&-\tau+i\sigma&i&\dots&0 &0&-i\tau+\sigma&1&\dots&0 \\
			\vdots&\vdots&\vdots&\vdots &\vdots&\vdots&\vdots&\vdots&\vdots&\vdots\\
		0&0 &0 &\dots&1&0&0 &0 &\dots&i\\
		0&0&0&\dots&i &0&0&0&\dots&1 \\
		0&0&0 &\dots&\sigma+i\tau&0&0&0 &\dots&i\sigma+\tau\\
		0&0&0&\dots&-\tau+i\sigma&0&0&0&\dots&-i\tau+\sigma \\
		\end{smallmatrix} \right]}=S{J}_R,
	\end{displaymath}
	where $\lambda=\sigma+i\tau$. Therefore,
	$$A=RJ_RR^{-1}=(RS)(S^{-1}J_RS)(RS)^{-1}=NJN^{-1}.
	$$
	That is the columns of $N$ indeed capture a basis of $A$. For a pair of vectors $a$ and $b$
	$$i\cdot\overline{a+ib}=i(a-ib)=ia+b.$$
	This is exactly what multiplication by $S$ on the left does to real columns of $R$. Hence, we get the $i$-conjugate symmetry for $N$.
	
	Next, we want to show that the columns of $N$ form an FO basis.	The relation $S^{\ast}PS=P$ in  \eqref{eq:mapsS} follows from
	\begin{displaymath}
	S^{*}PS=P \text{ being equivalent to } PS^{*}PS=PP=I.
	\end{displaymath}
	Thus, we just need to show that $PS^{*}PS=I$.
	\begin{displaymath}PS^{*}PS=\frac{1}{2}\left [ \begin{smallmatrix}
	&&&&&&&\\
	0&0&\dots&0&i&0&0&\dots&1\\
	0&0&\dots&0&1&0&0&\dots&i\\
	0&0&\dots&i&0&0&0&\dots&0\\
	0&0&\dots&1&0&0&0&\dots&0\\
	\vdots&\vdots&\iddots&\vdots&\vdots&\vdots&\vdots&\iddots&\vdots\\
	0&i&\dots&0&0&0&1&\dots&0\\
	0&1&\dots&0&0&0&i&\dots&0\\
	i&0&\dots&0&0&1&0&\dots&0\\
	1&0&\dots&0&0&i&0&\dots&0\\
	&&&&&&&\\
	\end{smallmatrix} \right]\\
	\left [ \begin{smallmatrix}
	0&0&0&0&\dots&-i&1\\
	\vdots&\vdots&\vdots&\vdots&\iddots&\vdots&\vdots\\
	0&0&-i&1&\dots&0&0\\
	-i&1&0&0&\dots&0&0\\
	0&0&0&0&\dots&1&-i\\
	\vdots&\vdots&\vdots&\vdots&\iddots&\vdots&\vdots\\
	0&0&1&-i&\dots&0&0\\
	1&-i&0&0&\dots&0&0\\
	\end{smallmatrix} \right]\\=I.
	\end{displaymath}
	
	Since $(A,H)\overset{R}{\mapsto}(J_R,P)$, the above relation implies that
		\begin{equation}\label{eq:mapsS}
	({J_R},P)\overset{S}{\mapsto}({J},P).
	\end{equation}
	Hence, $(A,H)\overset{N}{\mapsto}(J,P)$ and the basis in question is FO as well as $i$-CS, i.e. it is $i$-FOCS.
	
\end{proof} 


	The converse statement follows from the relation $(J,P)\overset{S^{-1}}{\mapsto}(J_R,P)$, using similar argument.

Combining Proposition \ref{thm:real main} and Lemma \ref{thm:focs-rc}, we  get the result of Theorem~\ref{thm:focs1}.

\end{document}